\theoremstyle{plain}
\newtheorem{theorem}{\bf Theorem}[section]
\newtheorem{lemma}[theorem]{\bf Lemma}
\newtheorem{prop}[theorem]{\bf Proposition}
\newtheorem{construction}[theorem]{\bf Construction}
\newtheorem{nota}[theorem]{\bf Notation}
\newtheorem{remark}[theorem]{\bf Remark}
\newtheorem{defi}[theorem]{\bf Definition}
\def\ex{\hbox{\rm ex}}
\title{Multicolor Turán numbers II - a generalization of the Ruzsa-Szemerédi theorem and new results on cliques and odd cycles }
\author{Benedek Kovács\thanks{ELTE Linear Hypergraphs  Research Group, Eötvös Loránd University, Budapest, Hungary. The author is partially supported by  the ÚNKP, New National Excellence Program of the Ministry for Innovation and
		Technology from the source of the National Research, Development and Innovation Fund.
		E-mail: {\tt benoke98@student.elte.hu}}%\todo{affiliáció/mail/támogató}}
	\and Zolt\'an L\'or\'ant Nagy\thanks{ELTE Linear Hypergraphs  Research Group,
		E\"otv\"os Lor\'and University, Budapest, Hungary. The author is supported by the Hungarian Research Grant (NKFI) No. PD  134953.  	E-mail: {\tt nagyzoli@cs.elte.hu}}
}
\date{}
\begin{document}
	\maketitle

	\begin{abstract} In this paper we continue the study of a natural generalization of  Turán's forbidden subgraph problem and the Ruzsa-Szemerédi problem.
		Let $\ex_F(n,G)$ denote the maximum number of edge-disjoint copies of a fixed simple graph $F$ that can be placed on an $n$-vertex ground set without forming  a subgraph $G$ whose edges are from different $F$-copies. The case when both $F$ and $G$ are triangles essentially gives back the theorem of Ruzsa and Szemerédi. We extend their results to the case when $F$ and $G$ are arbitrary cliques by applying a number theoretic result due to Erdős, Frankl and Rödl.  This extension in turn decides the order of magnitude for a large family of graph pairs, which will be subquadratic, but almost quadratic. Since the linear $r$-uniform hypergraph Turán problems to determine $\ex_r^{lin}(n,G)$ form a class of the multicolor Turán problem, following the identity $\ex_r^{lin}(n,G)=\ex_{K_r}(n,G)$, our results determine the linear hypergraph Turán numbers of every graph of girth $3$ and for every $r$ up to a subpolynomial factor.\\
		Furthermore, when $G$ is a triangle, we settle the case $F=C_5$ and give bounds for the cases $F=C_{2k+1}$, $k\ge 3$ as well.%\todo{ plusz: a semi-random dolgokat hangsúlyozni. Reglemma szerepe}
		
		Keywords: extremal graphs, multicolor, graph packings, hypergraph Turán problems
	\end{abstract}

	\section{Introduction}

	The theorem of Turán \cite{Turan} and in general, his forbidden subgraph problems initiated a broad area in extremal graph theory. Recently the second author together with Imolay, Karl and Váli 
	started to investigate a new type of  generalization of Turán-type problems  \cite{IKNV}. For a fixed pair $(F,G)$ of nonempty graphs, the aim is to determine the maximum number of edge-disjoint copies of $F$ on $n$ vertices which do not form a subgraph $G$ such that all the edges of this subgraph $G$ are from different $F$-copies. Coloring each $F$-copy with a color of its own, the forbidden configuration is a multicolor $G$. This maximum is denoted by $\ex_F(n,G)$ and we refer to it as the {\em $F$-multicolor Turán number of $G$}.
	
	Note that in the case when $F$ is just an edge (that is, $F=K_2$), we get back the original Turán problem concerning the forbidden subgraph $G$.
	
	The case $F=G=K_3$ is of considerable interest as it turns out that there is a correspondence between the famous Ruzsa-Szemerédi $(6,3)$-problem and this case of the multicolor Turán problem. 
	The (6,3)-problem asks for the maximum number of hyperedges in a 3-uniform hypergraph on $n$ vertices such that no six vertices span three or more hyperedges. By considering each hyperedge as a distinct copy of $K_3$, the two problems can be seen to be essentially the same. In 1978 Ruzsa and Szemerédi proved \cite{Ruzsa} their theorem, which reads as follows in our terminology:
	
	$$n^2e^{-O(\sqrt{\log n})}\le \ex_{K_3}(n, K_3)=o({n^2}).$$
	
	%Our problem is closely related to the well studied linear $r$-uniform hypergraph Turán problems on determining $\ex_r^{lin}(n,G)$. As it was pointed out in \cite{IKNV}, $\ex_r^{lin}(n,G)=\ex_{K_r}(n,G)$ holds, thus results for the case when $F$ is a clique are of special interest.
	
	Their lower bound uses a construction based on Behrend's result \cite{Behrend} that shows  it is possible to choose a subset $H$ of $\{1,2,\dots,n\}$ having size $ne^{-O(\sqrt{\log n})}$ which does not contain an arithmetic progression of length 3. 
	
	\bigskip
	%\todo{összefoglalni az előző cikk fő eredményeit}
	
	The second author together with Imolay, Karl and Váli \cite{IKNV} characterized those graph pairs $(F,G)$ for which the multicolor Turán number has quadratic order of magnitude, moreover they proved an asymptotic result on $\ex_F(n,G)$ in the case when the chromatic number of $F$ is less than that of $G$. %\todo{Elaboration on Edrős-Stone}
	These results provide a generalization of the famous Erdős-Stone-Simonovits theorem.
	
	\begin{theorem}[Imolay, Karl, Nagy and Váli \cite{IKNV}]\label{mainTHM1}
		$\ex_F(n,G)=\Theta(n^2)$ if and only if there is no homomorphism from $G$ to $F$, and $\ex_F(n,G)=o(n^2)$ otherwise.
	\end{theorem}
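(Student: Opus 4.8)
The plan is to prove the two implications separately, after recording the trivial bound $\ex_F(n,G)=O(n^2)$: any family of edge-disjoint copies of $F$ uses at most $\binom{n}{2}$ edges in total and each copy consumes $|E(F)|$ of them, so there are at most $\binom{n}{2}/|E(F)|=O(n^2)$ copies, independently of $G$. It therefore remains to show (A) that $\ex_F(n,G)=\Omega(n^2)$ whenever there is no homomorphism $G\to F$, and (B) that $\ex_F(n,G)=o(n^2)$ whenever such a homomorphism exists.

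For direction (A) I would exhibit an explicit packing living inside a blow-up of $F$. Write $V(F)=\{1,\dots,k\}$, pick a prime $p$ with $n/(2k)\le p\le n/k$, split the ground set into classes $V_1,\dots,V_k$ and identify $\zz_p$ with a subset of each $V_i$. For every pair $(x,y)\in\zz_p^2$ place the transversal copy $F_{x,y}$ whose vertex in $V_i$ is $x+iy$ and whose edges are $\{x+iy,\,x+jy\}$ for $ij\in E(F)$. For a fixed edge $ij\in E(F)$ the map $(x,y)\mapsto(x+iy,\,x+jy)$ has determinant $j-i\not\equiv 0\pmod p$, hence is a bijection, so distinct pairs $(x,y)$ yield edge-disjoint copies; this gives $p^2=\Omega(n^2)$ edge-disjoint copies of $F$. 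Their union sits inside the blow-up of $F$ (complete between $V_i$ and $V_j$ exactly when $ij\in E(F)$), and the standard fact that this blow-up is $G$-free if and only if there is no homomorphism $G\to F$ shows the union contains no copy of $G$ at all, in particular no multicolour one. Together with the trivial upper bound this gives $\Theta(n^2)$.

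Direction (B) is the substantive one, and it is where the Ruzsa--Szemerédi phenomenon enters: the case $F=G=K_3$ is precisely their theorem, so I would aim to generalise that argument through the graph removal lemma. Fix a homomorphism $\phi:G\to F$ and a packing $F_1,\dots,F_t$ with no multicolour $G$; colour each edge of the union $U$ by the index of the copy containing it (well defined by edge-disjointness) and suppose for contradiction that $t\ge\varepsilon n^2$. Each $F$-copy is monochromatic and, via $\psi_i\circ\phi$, carries a monochromatic homomorphic image of $G$; the aim is to upgrade one of these into an \emph{embedded} copy of $G$ whose $|E(G)|$ edges all receive distinct colours, which is the forbidden configuration. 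The intended skeleton mirrors the triangle case: produce, from the packing, many copies of $G$ in $U$, then use the removal lemma for $G$ to argue that destroying all of them would require $\Omega(t)=\Omega(n^2)$ edge deletions while their number is only $o(n^{|V(G)|})$, forcing $t=o(n^2)$ exactly as when the $t$ edge-disjoint triangles are the only triangles and each demands its own deletion.

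The hard part will be the coupling of this removal argument with the colouring, since two features that hold for cliques fail in general. First, $F$ need not contain $G$ as a subgraph, only a homomorphic image, so one cannot point to a literal copy of $G$ inside each $F_i$; I would instead argue through the homomorphism together with a counting/blow-up step, or pass to the hypergraph removal lemma with each $F$-copy in the role of a hyperedge, mirroring the $(6,3)$-formulation. Second, and more seriously, a copy of $G$ in $U$ may be \emph{degenerate}, reusing a colour on two edges that lie in a common $F$-copy, whereas for the pair $K_4,K_3$ completeness forces every triangle to be monochromatic or rainbow; separating genuine rainbow copies from degenerate ones is the crux. I would address the colouring by randomly partitioning the $t$ colours into $|E(G)|$ classes and searching for a copy of $G$ transversal to these classes — such a copy is automatically rainbow — so that the supersaturation/removal step only has to guarantee one colour-transversal copy, after which the contradiction with the assumption of no multicolour $G$ closes the proof.
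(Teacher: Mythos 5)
This statement is quoted from \cite{IKNV}; the present paper does not reprove it, so your argument has to be judged on its own merits rather than against a proof in the text. Your lower bound and the trivial $O(n^2)$ bound are correct and complete: the transversal copies $F_{x,y}$ inside $F[\zz_p]$ are edge-disjoint for exactly the determinant reason you give (one only needs $p>v(F)$, which your choice of $p$ guarantees for large $n$), and the blow-up of $F$ contains no copy of $G$ precisely when there is no homomorphism $G\to F$. This is a more explicit route to the $\Omega(n^2)$ bound than the Haxell--R\"odl decomposition of $F[m]$ that the present paper invokes after Theorem \ref{penta}, at the cost of a worse constant, which is irrelevant for a $\Theta(n^2)$ statement.

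The $o(n^2)$ direction, however, remains a sketch, and the place where it is a sketch is exactly where the theorem is hard. Your removal-lemma scheme needs two inputs: (i) the total number of copies of $G$ in the union $U$ is $o(n^{v(G)})$, and (ii) destroying all copies of $G$ in $U$ requires $\Omega(t)$ edge deletions. Input (i) you could in fact have closed: by hypothesis every copy of $G$ in $U$ is non-rainbow, hence contains two edges lying in a common $F_i$, and there are only $O\bigl(t\cdot n^{v(G)-3}\bigr)=O(n^{v(G)-1})$ such configurations. The genuine gap is (ii). When $G\subseteq F$ each $F_i$ supplies its own copy of $G$, these $t$ copies are pairwise edge-disjoint, and (ii) is immediate; but the theorem only assumes a homomorphism $G\to F$, and for pairs such as $F=K_3$, $G=C_5$ the individual $F$-copies contain no copy of $G$ whatsoever, so there is no a priori supply of edge-disjoint $G$-copies to force the deletions --- conceivably $U$ contains very few copies of $G$, in which case your argument concludes nothing. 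Your proposed remedies do not close this: randomly partitioning the $t$ colours into $e(G)$ classes certifies that a copy is rainbow once many copies are available, but does not produce them; and the blow-up / hypergraph-removal step is named but not executed (note that a copy of $G$ in a blow-up of $U$ with all edge-colours distinct only projects to a rainbow \emph{homomorphic image} of $G$ in $U$, which is not the forbidden configuration). As written, the proposal proves the $o(n^2)$ direction only under the stronger hypothesis $G\subseteq F$.
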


	\begin{theorem} [Imolay, Karl, Nagy and Váli \cite{IKNV}]\label{mainth2}
		If $\chi(F)<\chi(G)$, then
		$$\ex_F(n,G)\cdot |E(F)| \sim {\ex(n,G)} \sim \left({1-\frac{1}{\chi(G)-1}}\right)\binom{n}{2}.$$
	\end{theorem}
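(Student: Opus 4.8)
The plan is to establish matching asymptotic upper and lower bounds. Since $\chi(F)<\chi(G)$ forces $\chi(G)\ge 3$, the Erd\H{o}s--Stone--Simonovits theorem gives $\ex(n,G)\sim\left(1-\frac{1}{\chi(G)-1}\right)\frac{n^2}{2}$, so the second asymptotic equivalence in the statement is immediate; it therefore suffices to prove $\ex_F(n,G)\sim \ex(n,G)/e(F)$.

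For the upper bound, take an optimal family of $t=\ex_F(n,G)$ edge-disjoint copies of $F$, colour each copy with its own colour, and let $H$ be their union, so $e(H)=t\cdot e(F)$ and in particular $t=O(n^2)$. The key observation is that, since $\chi(F)<\chi(G)$, no single copy of $F$ can contain $G$; hence every copy of $G$ inside $H$ uses edges of at least two colours, and as the family is free of multicolour $G$, every copy of $G$ in $H$ in fact contains two edges sharing a colour. I would then bound the number of copies of $G$ in $H$ by charging each to such a monochromatic pair of edges: there are at most $t\binom{e(F)}{2}=O(n^2)$ monochromatic pairs, and a fixed pair of distinct edges spans at least three vertices, so it extends to at most $O(n^{v(G)-3})$ copies of $G$. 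Thus $H$ contains only $O(n^{v(G)-1})=o(n^{v(G)})$ copies of $G$. By the Erd\H{o}s--Simonovits supersaturation theorem, a graph with $o(n^{v(G)})$ copies of $G$ has at most $\ex(n,G)+o(n^2)$ edges, whence $t\cdot e(F)=e(H)\le \ex(n,G)+o(n^2)$ and $\ex_F(n,G)\le (1+o(1))\ex(n,G)/e(F)$.

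For the lower bound I exploit $\chi(F)\le\chi(G)-1$ in the opposite direction. Let $T=T(n,\chi(G)-1)$ be the Tur\'an graph; being $(\chi(G)-1)$-colourable it cannot contain $G$, so any set of edge-disjoint copies of $F$ placed inside $T$ automatically avoids every copy of $G$, and a fortiori every multicolour $G$. Since $\chi(F)\le\chi(G)-1$, the graph $F$ embeds into the complete $(\chi(G)-1)$-partite graph and hence into $T$. It then remains to pack $T$ with edge-disjoint copies of $F$ covering a $(1-o(1))$-fraction of its edges, which I would obtain from an approximate $F$-decomposition of the dense, quasiregular multipartite graph $T$, for instance via the R\"odl nibble or the Haxell--R\"odl packing theorem. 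This produces $(1-o(1))e(T)/e(F)$ edge-disjoint copies of $F$, and since $e(T)\sim \ex(n,G)$ by Tur\'an's theorem together with Erd\H{o}s--Stone--Simonovits, we get $\ex_F(n,G)\ge (1-o(1))\ex(n,G)/e(F)$.

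The main obstacle is the lower-bound packing step: the counting-plus-supersaturation argument for the upper bound is routine once one notices that every copy of $G$ is non-rainbow, but producing an $F$-packing of the Tur\'an graph that wastes only an $o(1)$-fraction of the edges requires an external approximate-decomposition result, and one must check that $F$ can be realised across the colour classes of $T$ in enough ways for the nibble to apply. Combining the two bounds with the Erd\H{o}s--Stone--Simonovits estimate for $\ex(n,G)$ then completes the proof.
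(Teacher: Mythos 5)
This statement is quoted from \cite{IKNV}; the present paper states it without proof, so there is no internal argument to compare against. Judged on its own, your proposal is correct and follows the standard route for this result. The upper bound is sound: every copy of $G$ in the union $H$ of the $F$-copies is non-rainbow, hence contains a monochromatic pair of distinct edges, and charging each $G$-copy to such a pair gives $O\left(n^{2}\cdot n^{v(G)-3}\right)=o(n^{v(G)})$ copies of $G$, so Erd\H{o}s--Simonovits supersaturation bounds $e(H)$ by $\ex(n,G)+o(n^2)$. For the lower bound, the packing step you flag as the main obstacle is handled exactly as the paper does elsewhere: the balanced complete $(\chi(G)-1)$-partite graph is edge-transitive and contains $F$ (since $\chi(F)\le\chi(G)-1$), so averaging a single embedding over the automorphism group yields a perfect fractional $F$-packing, and Theorem \ref{HR} (Haxell--R\"odl) converts it into an integer packing missing only $o(n^2)$ edges --- this is the same mechanism the paper invokes after Theorem \ref{penta} to decompose blow-ups. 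One point worth stating explicitly is that $\chi(F)\ge 2$ forces $\chi(G)\ge 3$, so $\ex(n,G)=\Theta(n^2)$ and the $o(n^2)$ error terms are genuinely lower order, which also justifies the second asymptotic equivalence via Erd\H{o}s--Stone--Simonovits.
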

	
	Our problem is closely related to the well-studied linear $r$-uniform hypergraph Turán problems to determine $\ex_r^{lin}(n,G)$ for a graph $G$.  A hypergraph $H$ is called a Berge copy of $G$ or Berge-$G$ if we
	can choose a two-element subset of each hyperedge of $H$ to obtain a copy of $G$. A hypergraph is linear if every pair of vertices appears together in at most one hyperedge.
	$\ex_r^{lin}(n,G)$ denotes the maximum number of hyperedges in a linear $r$-uniform hypergraph on $n$ vertices which does not contain a Berge copy of $G$.  We refer to \cite{Gyori1, Furedi-Ozkahya, Gao, Gao2, Tait} for results on linear $r$-uniform hypergraph Turán problems.
	As it was pointed out in \cite{IKNV}, $\ex_r^{lin}(n,G)=\ex_{K_r}(n,G)$ holds, thus results in the case when $F$ is a clique are of special interest. 
	
	Other related variants are the generalised Turán problems, whose investigation was initiated by Alon and Shikhelman \cite{AS}. $\ex{(n, H, G)}$ denotes the maximum number of copies of $H$ in a $G$-free graph on $n$ vertices. Note that some special cases have attracted interest earlier as well, particularly the cases when $H$ and $G$ are both cliques or both cycles.  The case $H=C_5$, $G=C_3$ was the well-known pentagonal conjecture of Erdős \cite{Erdos84}, resolved via the flag algebra method \cite{Grzesik, Hatami}.
	
	For cliques, Theorem \ref{mainTHM1} and \ref{mainth2} cover the case when $F$ is a smaller clique than $G$ and shows that the function is sub-quadratic otherwise.
	Our first contribution  states  that for any fixed  $t\ge 3$%\todo{ez így kicsit fura, hogy arb. large-ot és legalább 3-at is mondunk}
	, it is possible to pack almost quadratically many copies of $K_t$ into an $n$-vertex ground set without creating a multicolor $K_3$.

	\begin{theorem}\label{K_t K_3}
		For every integer $t\ge 3$, we have $$ n^2e^{-O(\sqrt{\log n})}\le \ex_{K_t}(n,K_3)=o(n^2).$$
	\end{theorem}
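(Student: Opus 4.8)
The plan is to prove both bounds separately. The upper bound $\ex_{K_t}(n,K_3)=o(n^2)$ should follow almost immediately from Theorem~\ref{mainTHM1}: since there is a homomorphism from $K_3$ to $K_t$ for every $t\ge 3$ (indeed $K_3$ is a subgraph of $K_t$), the characterization gives $\ex_{K_t}(n,K_3)=o(n^2)$. So the entire substance of the theorem lies in the lower bound.

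For the lower bound, I would mimic the Ruzsa--Szemerédi construction but adapt it so that the building blocks are copies of $K_t$ rather than triangles. Recall that the classical construction places a tripartite-like structure on vertex classes and uses a large $3$-AP-free Behrend set $H\subseteq\{1,\dots,m\}$ to define triangles with vertices indexed by $x$, $x+d$, $x+2d$ for $x\in[m]$, $d\in H$; the $3$-AP-free property guarantees that no two triangles share an edge and that no multicolor triangle arises. The natural approach is to start from such a Ruzsa--Szemerédi packing of $n^2e^{-O(\sqrt{\log n})}$ edge-disjoint triangles with no multicolor $K_3$, and then to ``thicken'' each triangle into a copy of $K_t$ by adding $t-3$ extra vertices. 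Concretely, I would attach to each triangle a private set of $t-3$ new vertices, joined to all three original vertices and to each other, producing a $K_t$ whose only edges lying in the shared ground set are the three original triangle edges. Since the extra vertices are private to each clique, the clique copies remain edge-disjoint, and any multicolor $K_3$ would have to use three edges from three distinct $K_t$-copies.

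The key point to verify is that thickening does not create a multicolor $K_3$ and that the vertex count stays $\Theta(n)$. Each new vertex is used by exactly one clique, so a multicolor triangle using a ``new'' edge is impossible, since such an edge is incident to a private vertex touched by only one colour. Hence any multicolor $K_3$ would be a multicolor triangle already present among the original triangle edges, which the Behrend construction forbids. For the vertex count, one must check the total number of added private vertices is $O(n)$: if the base construction uses $N$ triangles on $cn$ vertices, naively adding $(t-3)$ private vertices per triangle would blow up the ground set by a factor of $N$, which is far too many. The honest fix is to rescale: take the base Ruzsa--Szemerédi construction on a smaller ground set of size $n/c_t$ for an appropriate constant $c_t$ depending on $t$, so that after adding private vertices the total is at most $n$, while the number of cliques is still $(n/c_t)^2 e^{-O(\sqrt{\log(n/c_t)})}=n^2 e^{-O(\sqrt{\log n})}$.

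The main obstacle I anticipate is precisely this bookkeeping: adding genuinely private $(t-3)$-sets to each of $\Theta(n^2 e^{-O(\sqrt{\log n})})$ cliques consumes far more than a linear number of vertices, so a naive thickening fails. The real work is therefore to share the extra vertices cleverly --- for instance, by placing the additional $t-3$ coordinates on shared blocks indexed so that any two cliques sharing an extra vertex still cannot complete a multicolor triangle, while keeping the packing edge-disjoint. This is where a more careful algebraic construction (again leveraging the AP-free set, perhaps on higher-dimensional index sets as in the Alon--Shapira number-theoretic input mentioned in the abstract) is likely needed, rather than a purely combinatorial gadget-attachment argument.
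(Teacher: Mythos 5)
Your upper bound is fine and matches the paper: Theorem~\ref{mainTHM1} applies since $K_3$ maps homomorphically into $K_t$. But the lower bound --- the entire content of the theorem --- is not actually proved in your proposal. You correctly diagnose that the ``private vertex'' thickening fails (each of the $\Theta(n^2e^{-O(\sqrt{\log n})})$ cliques would need its own $t-3$ new vertices, consuming quadratically many vertices), and you then gesture at ``sharing the extra vertices cleverly'' via an algebraic construction, but you never exhibit one. That missing construction is the whole proof.

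What the paper actually does is not a thickening of the Ruzsa--Szemer\'edi triangles but a direct generalization of the arithmetic-progression idea to length-$t$ progressions. Take $t$ vertex classes (the $i$-th of size $in$) and, for each $\ell\in[n]$ and each $k$ in a suitable set $A\subseteq\{1,\dots,n\}$, form the clique on $\{(1,\ell),(2,\ell+k),\dots,(t,\ell+(t-1)k)\}$. Any two vertices determine $(\ell,k)$, so the cliques are edge-disjoint, and there are $n|A|$ of them. The crucial point --- and the reason Behrend's $3$-AP-free set no longer suffices --- is that a multicolor triangle on vertices $(i,x),(j,y),(k,z)$ with $i<j<k$ forces three \emph{distinct} slopes $a,b,c\in A$ satisfying $(j-i)a+(k-j)b=(k-i)c$ with $1\le j-i,\,k-j\le t-2$; this is a general convex relation, not just $a+b=2c$. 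The number-theoretic input is therefore the Alon--Shapira theorem (Theorem~\ref{AS} in the paper), which produces $A$ of size $ne^{-O(\sqrt{\log n})}$ containing no triple with $(t-2)$-limited ratio. Your instinct that the Alon--Shapira result is the needed ingredient was right, but without the explicit multipartite construction and the identification of exactly which linear relations must be excluded, the argument has a genuine gap.
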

	
	Note that the multicolor Turán function satisfies the following monotonicity properties (see Proposition 2.1 in \cite{IKNV}):
	
	\begin{enumerate}[(i)]
		\item if $F_1$ is a subgraph of $F_2$ and $G$ is any graph then $\ex_{F_2}(n,G) \leq \ex_{F_1}(n,G)$,
		\item if $G_1$ is a subgraph of $G_2$ and $F$ is any graph then $\ex_F(n,G_1)\leq \ex_F(n,G_2)$.
	\end{enumerate}
	
	Using property (i), we get an even more general theorem.
	
	\begin{theorem}\label{F K_3}
		Suppose that $F$ is a graph of girth $3$ (that is, $F$ contains a triangle). Then $$ n^2e^{-O(\sqrt{\log n})}\le \ex_{F}(n,K_3)=o(n^2).$$
	\end{theorem}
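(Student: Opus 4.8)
The plan is to derive both the lower and the upper bound from the monotonicity property stated above, sandwiching $F$ between two cliques. Since $F$ has girth $3$ it contains a triangle, and since it is a simple graph on $t:=|V(F)|$ vertices it is a subgraph of $K_t$; thus $K_3\subseteq F\subseteq K_t$. The two inclusions feed the two halves of the statement.

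For the lower bound, I would apply monotonicity to the inclusion $F\subseteq K_t$. Taking $F$ as the smaller graph and $K_t$ as the larger one, Proposition 2.1 of \cite{IKNV} gives
$$\ex_{K_t}(n,K_3)\le \ex_{F}(n,K_3).$$
Because $F$ contains a triangle we have $t\ge 3$, so Theorem \ref{K_t K_3} applies and yields $\ex_{K_t}(n,K_3)\ge n^2e^{-O(\sqrt{\log n})}$. Chaining the two inequalities gives the claimed lower bound $n^2e^{-O(\sqrt{\log n})}\le \ex_F(n,K_3)$.

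For the upper bound, I would instead use the inclusion $K_3\subseteq F$, now viewing $K_3$ as the smaller graph and $F$ as the larger one. Monotonicity gives
$$\ex_{F}(n,K_3)\le \ex_{K_3}(n,K_3),$$
and the Ruzsa--Szemer\'edi theorem (equivalently, Theorem \ref{mainTHM1} applied to the inclusion homomorphism $K_3\to F$) bounds the right-hand side by $o(n^2)$. Hence $\ex_F(n,K_3)=o(n^2)$, completing the two-sided estimate.

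The argument is short precisely because all the combinatorial substance is already packaged in Theorem \ref{K_t K_3}, whose lower bound rests on the Behrend/Ruzsa--Szemer\'edi-type construction. The only point that requires care is the \emph{direction} of the monotonicity inequality: one must check that passing to a subgraph $F$ of $F'$ can only increase the packing number (a valid $F'$-packing restricts to an edge-disjoint $F$-packing that inherits the colouring and hence remains free of a multicolour $K_3$), so that $F$ sitting between $K_3$ and $K_t$ forces $\ex_F(n,K_3)$ to lie between the two known bounds. Note that the hypothesis that $F$ is connected is not actually needed for either inequality; it suffices that $F$ has girth $3$, i.e.\ contains a triangle.
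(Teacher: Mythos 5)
Your proposal is correct and follows essentially the same route as the paper: the lower bound comes from the monotonicity $\ex_{K_t}(n,K_3)\le \ex_{F}(n,K_3)$ for $F\subseteq K_t$ together with Theorem \ref{K_t K_3}, and the upper bound from the fact that $F$ contains a triangle (whether phrased via $\ex_F(n,K_3)\le\ex_{K_3}(n,K_3)=o(n^2)$ or via the homomorphism criterion of Theorem \ref{mainTHM1}). Your checks of the direction of the monotonicity inequality and the observation that connectedness is not actually used are both accurate.
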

	
	On the other hand, (ii) together with Theorem \ref{K_t K_3} resolves all remaining cases when $F$ and $G$ are cliques, and yields
	
	\begin{theorem}\label{K_t K_r}
		For every $t\ge r\ge 3$, we have $$ n^2e^{-O(\sqrt{\log n})}\le \ex_{K_t}(n,K_r)=o(n^2)$$
		and in general, if $G$ has girth $3$ and there is a homomorphism from $G$ to $F$, then 
		$$ n^2e^{-O(\sqrt{\log n})}\le \ex_{F}(n,G) = o(n^2).$$
	\end{theorem}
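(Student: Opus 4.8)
The plan is to derive both assertions from Theorem \ref{K_t K_3} together with the two monotonicity properties of $\ex_F(n,G)$, so that essentially no new construction is needed. Recall the monotonicity in the host graph (Proposition 2.1 of \cite{IKNV}): if $F\subseteq F'$ then $\ex_{F'}(n,G)\le \ex_F(n,G)$. I would first record the companion monotonicity in the forbidden graph, namely that $G\subseteq G'$ implies $\ex_F(n,G)\le \ex_F(n,G')$. This holds because any multicolor copy of $G'$ contains, on a subset of its edges, a multicolor copy of $G$ (the colours of the $G$-edges are still pairwise distinct); hence a packing avoiding multicolor $G$ automatically avoids multicolor $G'$, so the extremal packing for the larger forbidden graph is at least as large.

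For the clique statement the lower bound is immediate from this second monotonicity. Since $r\ge 3$ we have $K_3\subseteq K_r$, and therefore $\ex_{K_t}(n,K_r)\ge \ex_{K_t}(n,K_3)\ge n^2e^{-O(\sqrt{\log n})}$ by Theorem \ref{K_t K_3}. For the matching upper bound $o(n^2)$ I would invoke Theorem \ref{mainTHM1}: because $t\ge r$, the inclusion of vertex sets furnishes an injective homomorphism $K_r\to K_t$, so a homomorphism from $G=K_r$ to $F=K_t$ exists, and the characterisation then forces $\ex_{K_t}(n,K_r)=o(n^2)$.

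For the general girth-$3$ statement I would chain both monotonicities. Let $t=|V(F)|$; since $F$ is a graph on $t$ vertices we have $F\subseteq K_t$, so monotonicity in the host graph yields $\ex_F(n,G)\ge \ex_{K_t}(n,G)$. As $G$ has girth $3$ it contains a triangle, i.e.\ $K_3\subseteq G$, whence monotonicity in the forbidden graph gives $\ex_{K_t}(n,G)\ge \ex_{K_t}(n,K_3)$. Combining with Theorem \ref{K_t K_3},
$$\ex_F(n,G)\ \ge\ \ex_{K_t}(n,G)\ \ge\ \ex_{K_t}(n,K_3)\ \ge\ n^2e^{-O(\sqrt{\log n})},$$
which is the claimed lower bound.

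The only point requiring care, and the place where I expect the argument to be most subtle, is keeping the two monotonicity directions straight: enlarging $F$ decreases $\ex_F$, while enlarging $G$ increases it, and the general lower bound above uses each of them exactly once, in opposite roles. This also explains why no general upper bound is asserted in the girth-$3$ case: an $o(n^2)$ bound would require a homomorphism $G\to F$ via Theorem \ref{mainTHM1}, which the girth-$3$ hypotheses do not guarantee — if $F$ omits some clique present in $G$ there may be no such homomorphism, and then $\ex_F(n,G)$ can be of quadratic order. Thus the substantive content lies entirely in Theorem \ref{K_t K_3}, and the present statement is a packaging of it through monotonicity.
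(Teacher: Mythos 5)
Your proof is correct and follows essentially the same route as the paper: both lower bounds come from chaining the monotonicity in $F$ (via $F\subseteq K_t$) with the monotonicity in $G$ (via $K_3\subseteq G$) back to Theorem \ref{K_t K_3}, and the $o(n^2)$ upper bound comes from Theorem \ref{mainTHM1}. You even state the homomorphism condition in the correct direction (a homomorphism $K_r\to K_t$ exists, hence $o(n^2)$), where the paper's proof contains a small slip saying ``no homomorphism.''
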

	
	%\begin{theorem}
	% For every $t\ge r\ge 3$, we have $$ n^2e^{-O(\sqrt{\log n})}\le \ex_{K_t}(n,K_r)=o(n^2)$$
	%and in general, for graphs $F$ and $G$, 
	%\begin{enumerate}[(a)]
	%\item if $G$ has girth $3$, then $ n^2e^{-O(\sqrt{\log n})}\le \ex_{F}(n,G)$.
	%\item if there is a homomorphism from $G$ to $F$, then $\ex_F(n,G)=o(n^2)$.
	%\end{enumerate}
	%\end{theorem}
	
	Note that when there is no homomorphism from $G$ to $F$, we already have  $\ex_{F}(n,G) = \Theta(n^2)$ due to Theorem \ref{mainTHM1}.
	
	\begin{proof} Choose an integer $t$ such that $F\subseteq K_t$. Then 
		$$ n^2e^{-O(\sqrt{\log n})}\le \ex_{K_t}(n,K_3)\leq  \ex_{F}(n,K_3)\leq \ex_{F}(n,G),$$ according to the monotonicity and Theorem \ref{K_t K_3}. As there is a homomorphism from $G$ to $F$ (which is also true in the case $F=K_t$, $G=K_r$ with $t\ge r\ge 3$), we also have $\ex_{F}(n,G)=o(n^2)$ due to Theorem \ref{mainTHM1}.
	\end{proof}
	
	These results drive attention to the cases when the multicolor Turán number is quadratic, but the asymptotics are not determined by Theorem \ref{mainth2}: for example when $G$ is a bipartite graph, or in general, $\chi(F)\ge \chi(G)$ without a homomorphism from $G$ to $F$. A notorious example in the latter case is when $F$ is a pentagon and $G$ is a triangle, which appeared in the Erdős pentagonal conjecture as well. We continue our paper by resolving this case in the multicolor Turán setting, which was mentioned as a conjecture in \cite{IKNV}.
	
	\begin{theorem}\label{penta}
		$$\ex_{C_5}(n,K_3)=(1+o(1))\frac{n^2}{25}.$$
	\end{theorem}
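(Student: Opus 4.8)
The plan is to prove matching lower and upper bounds. For the lower bound I would use a balanced blow-up of $C_5$ together with an explicit algebraic pentagon decomposition. Partition the $n\approx 5m$ vertices into classes $V_0,\dots,V_4$, each identified with $\zz_m$, and join $V_i$ to $V_{i+1}$ completely (indices mod $5$); the resulting graph is triangle-free, so it contains no triangle at all and a fortiori no multicolour triangle. It has $5m^2\sim n^2/5$ edges, so it suffices to decompose it into $m^2\sim n^2/25$ edge-disjoint pentagons. To do so, pick vectors $(\alpha_i,\beta_i)\in\zz_m^2$, $i\in\zz_5$, such that every cyclically consecutive pair $(\alpha_i,\beta_i),(\alpha_{i+1},\beta_{i+1})$ spans $\zz_m^2$, i.e. has determinant a unit mod $m$; this is immediate when $m$ is prime, and passing to primes covers all $n$ up to a negligible loss. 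Assign to each $(s,t)\in\zz_m^2$ the pentagon whose vertex in $V_i$ is $\alpha_i s+\beta_i t$. Unimodularity makes $(s,t)\mapsto(x_i,x_{i+1})$ a bijection for every $i$, so these $m^2$ pentagons are pairwise edge-disjoint and exactly exhaust all edges between consecutive classes, yielding $\ex_{C_5}(n,K_3)\ge(1-o(1))n^2/25$.

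For the upper bound let $\cF$ be a family of $N$ edge-disjoint pentagons with no multicolour triangle and let $H=\bigcup\cF$, so that $e(H)=5N$; the goal is $e(H)\le(1/5+o(1))n^2$. The first ingredient is a rigidity of triangles: since a single $C_5$ is triangle-free, every triangle of $H$ must have two edges lying in a common pentagon, and these two edges form a path of length two in that $C_5$. Hence each triangle is $2$-coloured and is determined by this repeated-colour cherry together with its closing chord; as each pentagon has only five such cherries, $H$ contains at most $5N$ triangles. I would then apply Szemerédi's regularity lemma to $H$ and pass to the cleaned reduced graph $R$, keeping only $\varepsilon$-regular pairs of density at least $\delta$. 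A triangle of $R$ would correspond to three clusters that are pairwise dense and regular, hence to $\Omega(n^3)$ triangles in $H$; this contradicts the bound $T\le 5N\le\binom{n}{2}$ for $n$ large, so $R$ is triangle-free, and the usual counting gives $e(H)\le(\lambda(R)+o(1))n^2$, where $\lambda$ denotes the graph Lagrangian.

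Triangle-freeness alone yields only $\lambda(R)\le\tfrac14$ by Motzkin–Straus, which falls short of the target $\lambda(C_5)=\tfrac15$, and closing this gap is the main obstacle. The extra leverage is that $H$ is an edge-disjoint union of \emph{odd} cycles: a complete bipartite block cannot be covered by pentagons without forcing $\Omega(n^2)$ edges inside the classes, so the edge-disjoint covering constraint rules out the bipartite Motzkin–Straus optimisers and forces the heavy part of $R$ to be non-bipartite. I would turn this into a stability statement asserting that the only triangle-free reduced graphs whose blow-ups can simultaneously carry a near-perfect edge-disjoint $C_5$-packing are those admitting a homomorphism to $C_5$, for which $\lambda(R)\le\lambda(C_5)=\tfrac15$ by the monotonicity of the Lagrangian under homomorphisms. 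This gives $e(H)=5N\le(\tfrac15+o(1))n^2$, that is $N\le(1+o(1))n^2/25$, matching the construction. The delicate point, and the part I expect to require the most care, is precisely this last stability argument: converting the qualitative non-bipartiteness forced by the odd-cycle covering into the exact Lagrangian threshold $1/5$ rather than the generic triangle-free value $1/4$.
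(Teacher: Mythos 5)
Your lower bound is correct: the explicit $\zz_m$-linear decomposition of the balanced blow-up $C_5[m]$ into $m^2$ edge-disjoint pentagons is a clean, constructive alternative to the paper's route (which invokes the Haxell--R\"odl fractional-to-integer packing theorem to decompose $C_5[m]$), and since the blow-up is triangle-free the multicolour condition is vacuous. The problem is the upper bound, where the decisive step is missing. Regularity plus the (correct) observation that $H$ has $O(n^2)$ triangles only makes the reduced graph $R$ triangle-free, which gives $e(H)\le(\tfrac14+o(1))n^2$, i.e.\ $t\le(\tfrac1{20}+o(1))n^2$ --- short of the target. The ``stability statement'' you propose to close the gap from $\tfrac14$ to $\tfrac15$ is exactly the whole difficulty, and the route you sketch for it does not work: by Motzkin--Straus the Lagrangian of \emph{any} triangle-free graph with an edge equals $\tfrac14$, so $\lambda(C_5)=\tfrac14$, not $\tfrac15$ (the uniform weighting giving $\tfrac15$ is not the optimum), and monotonicity under homomorphisms therefore buys nothing. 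Even replacing the Lagrangian by the uniform edge density of $R$ does not help, since a graph admitting a homomorphism to $C_5$ can be an unbalanced blow-up with edge density arbitrarily close to $\tfrac14$. What is really needed is an argument forcing the \emph{balanced} blow-up, and the edge-disjoint-covering observation you make (that near-bipartite hosts cannot carry $\Omega(n^2)$ pentagons) only excludes configurations near density $\tfrac14$, not those at density, say, $0.22$.

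The paper avoids all of this with an elementary double count that uses the multicolour hypothesis much more directly. For the union $H$ of $t$ edge-disjoint pentagons, it evaluates $\sum_{i=1}^t\sum_{v\in V(C_5^{(i)})}d(v)$ two ways: each vertex lies in exactly $d(v)/2$ pentagons, so the sum equals $\tfrac12\sum_v d(v)^2\ge 50t^2/n$ by Cauchy--Schwarz; on the other hand, since $\alpha(C_5)=2$, a vertex with three neighbours on a fixed pentagon closes a triangle over a pentagon edge $uv$, and the no-multicolour-triangle condition forces two of that triangle's edges into one colour class, which caps the number of such exceptional vertices at $O(1)$ per pentagon plus a global contribution of at most $5t$. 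This yields $50t^2/n\le(2n+O(1))t$, hence $t\le n^2/25+O(n)$, with no regularity lemma and no stability analysis. I would encourage you to look for an argument of this codegree/double-counting type rather than trying to push the regularity approach past the $\tfrac14$ barrier.
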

	
	This theorem shows the existence of graph pairs $(F, G)$ where $\ex_F{(n,G)}\sim \frac{1}{v(F)^2}n^2$ holds, verifying the conjecture stated in \cite{IKNV}. Observe that assuming $F$ and $G$ are graphs such that there is no homomorphism from $G$ to $F$,
	$$\ex_F(n,G) \geq \frac{1}{v(F)^2}n^2-o(n^2)$$ holds by \cite[Theorem~3.1]{IKNV}. The main tool behind this result was a theorem due to Haxell and Rödl \cite{packing}, which will be used in this paper as well, so we discuss it in detail in  Section 3. %follows from an asymptotic decomposition of the blow-up graph $F[m]$ of $F$ to graphs $F$. Indeed, the existence of such a packing follows from the result of Haxell and Rödl \cite{packing} on the connection between  which we will discuss in Section 3, while $F[m]$ does not contain $G$ as a subgraph according to our assumption.
	\bigskip
	
	We discuss the case when $F$ is a longer odd cycle as well. 
	
	\begin{theorem}\label{oddlarge} If $k\ge 3$ then
		$$\ex_{C_{2k+1}}(n,K_3)\cdot |E(C_{2k+1})|>(1+o(1))\frac{n^2}{5}, $$ and
		$$\lim_{k\to \infty}\ex_{C_{2k+1}}(n,K_3)\cdot |E(C_{2k+1})|=(1+o(1))\frac{n^2}{4}. $$ 
	\end{theorem}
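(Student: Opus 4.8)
The plan is to establish the matching upper bound $\frac{n^2}{4(2k+1)}$ for every $k$ by a Mantel-type argument on the union of all copies, and to prove the two lower bounds by explicit triangle-free constructions fed into an approximate-decomposition (packing) theorem.

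\textbf{Upper bound (valid for every $k\ge 2$, which controls the limit from above).} Let $\cC$ be a family of edge-disjoint copies of $C_{2k+1}$ on $n$ vertices with no multicolor $K_3$, let $H$ be their union with $E$ edges, and set $N=E/(2k+1)$. Since $2k+1\ge 7$ a single copy is triangle-free, so every triangle of $H$ has exactly two of its edges in a common copy; these are two consecutive edges of that cycle, i.e.\ a ``cherry'' centred at a cycle vertex. Each copy has exactly $2k+1$ cherries and each cherry closes into at most one triangle, so the number $T$ of triangles satisfies $T\le (2k+1)N=E$. For every edge $xy$ one has $d(x)+d(y)=|N(x)\cup N(y)|+|N(x)\cap N(y)|\le n+t_{xy}$ with $t_{xy}$ the number of triangles through $xy$; summing over edges gives $\sum_v d(v)^2\le nE+3T\le nE+3E$. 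Combining with $\sum_v d(v)^2\ge (2E)^2/n$ yields $E\le \tfrac{n^2}{4}+O(n)$, hence $N\le (1+o(1))\frac{n^2}{4(2k+1)}$.

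\textbf{Lower bound $\frac{1}{5(2k+1)}$ for fixed $k>2$.} I would take the balanced blow-up $C_5[m]$ with $m=n/5$, which is triangle-free (a blow-up of a triangle-free graph) and hence contains no multicolor triangle for \emph{any} decomposition. As $k\ge 2$ there is a homomorphism $C_{2k+1}\to C_5$, and since the parts have size $m\gg 2k+1$ this lifts to an embedding $C_{2k+1}\hookrightarrow C_5[m]$. The host has $5m^2=\frac{n^2}{5}$ edges, and by the symmetry of the five super-edges the optimal fractional $C_{2k+1}$-packing uses them evenly; the Haxell--Rödl packing theorem then produces $(1-o(1))\frac{5m^2}{2k+1}=(1-o(1))\frac{n^2}{5(2k+1)}$ edge-disjoint copies.

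\textbf{Limit lower bound $\frac{1}{4(2k+1)}$.} Here I would use a near-complete bipartite host together with a sparse non-bipartite gadget supplying odd closures. Take $A,B$ with $|A|=|B|\approx n/2$ and all edges of $K_{A,B}$, and adjoin a balanced $C_5$-blow-up gadget $G$ on parts $Q_1,\dots,Q_5$ with $Q_1\subseteq A$ and $Q_2,\dots,Q_5$ fresh of size $q'\approx n/\sqrt{k}=o(n)$. The graph is triangle-free: the neighbourhood $B\cup Q_2\cup Q_5$ of any $Q_1$-vertex is independent because $Q_2,Q_5$ are non-adjacent in $C_5$ and disjoint from $B$. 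Each intended copy is an even path of length $2k-4$ inside $K_{A,B}$ with both endpoints in $Q_1$, closed by the length-$5$ detour $Q_1\to Q_2\to Q_3\to Q_4\to Q_5\to Q_1$; it thus spends $2k-4\sim 2k$ edges in $K_{A,B}$, so the number of copies is $\sim\frac{n^2/4}{2k}\sim\frac{n^2}{4(2k+1)}$, while the $\Theta(n^2/k)$ gadget edges comfortably supply the $\Theta(n^2/k)$ detours. As $k\to\infty$ the gadget size and the gap between $2k-4$ and $2k+1$ become negligible, matching the upper bound.

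\textbf{Main obstacle.} The real difficulty in both lower bounds is upgrading a fractional packing to an almost-perfect integral one: I must realise simultaneously $\Theta(n^2/k)$ edge-disjoint even paths of prescribed length, with endpoints confined to $Q_1$, covering all but $o(n^2)$ edges of $K_{A,B}$, and splice each onto a vertex-disjoint pentagon detour. I expect to handle this with the Haxell--Rödl theorem (as already invoked for the general $\frac{1}{v(F)^2}$ bound) or a Rödl-nibble/near-regular cycle-decomposition argument, the bulk of the work being to verify the near-regularity and divisibility conditions that keep the uncovered portion at $o(n^2)$.
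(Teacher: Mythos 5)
Your upper bound via the cherry count ($T\le (2k+1)N=E$) and the Mantel-type inequality $d(x)+d(y)\le n+t_{xy}$ is correct and is a genuinely different, cleaner route than the paper's, which instead repeats the degree-sum argument of the pentagon proof to obtain the slightly stronger bound $(1+o(1))\frac{k}{2(2k+1)^2}n^2$; either version suffices for the upper half of the limit statement. Your sketch for the limit lower bound (a complete bipartite graph plus a small pentagon gadget, to be converted by Haxell--Rödl) also differs from the paper, which instead uses unbalanced blow-ups $G_{\alpha,\beta,\gamma}$ of $C_5$ whose edge densities tend to $1/4$; your construction is plausible in the regime $k\to\infty$, but you leave the key step --- exhibiting a near-perfect fractional $C_{2k+1}$-packing of your host --- as an acknowledged obstacle, whereas the paper carries out the analogous computation explicitly via a linear system.

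The genuine gap is in the first statement. The inequality $\ex_{C_{2k+1}}(n,K_3)>(1+o(1))\frac{n^2}{5(2k+1)}$ is meant as a strict improvement over the balanced blow-up: the point, made explicit in the paper just before the proof, is that for $k>2$ the graph $C_5[n/5]$, which is asymptotically optimal for pentagons, is no longer optimal. Your construction for this bound is exactly $C_5[n/5]$, which via Haxell--Rödl yields only $(1-o(1))\frac{n^2}{5(2k+1)}$ copies --- it matches the threshold but does not beat it by a constant factor. Your second construction does not rescue this for fixed small $k$: with gadget classes of size about $n/\sqrt{k}$ and only $2k-4$ bipartite edges per copy it is inferior to the blow-up for, say, $k=3$. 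The missing idea is the paper's unbalanced blow-up of $C_5$: allowing class sizes $\alpha n,\alpha n,\beta n,\beta n,\gamma n$ and solving the linear system that makes the natural symmetric fractional $C_{2k+1}$-packing perfect produces, for each $k\ge 3$, a triangle-free host of edge density strictly above $1/5$ (approximately $0.2016$ for $k=3$) that still decomposes, up to $o(n^2)$ edges, into $(2k+1)$-cycles. Separately, even for the non-strict bound you should justify that $C_5[n/5]$ admits a perfect fractional $C_{2k+1}$-packing: symmetry shows the optimum can be taken symmetric, not that its value equals $\frac{5m^2}{2k+1}$.
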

	
	Very recently, Balogh, Liebenau, Mattos and Morrison proved Theorem \ref{penta} in a stronger form \cite{Mattos} and their upper bound for the multicolor Turán number $\ex_{C_5}(n,K_3)$ has a better linear term than the one implicitly obtained from our proof. Their results also capture the structure of all extremal structures concerning $\ex_{C_5}(n,K_3)$.
	
	Several variants of Turán-type problems have been investigated before, here we refer to the introduction of \cite{IKNV}  for a short survey of related results. Besides, let us point out a very recent variant investigated by Gerbner \cite{Gerbner}. He assigned colors to distinct copies of a fixed subgraph $F$, while in our work, colors are essentially assigned to edges, and edges of the same color determine the copy of $F$. This implies that his results correspond to  hypergraph Turán problems concerning Berge copies of a fixed graph while our concept corresponds to \textit{linear} hypergraph Turán problems, when $F$ is a clique.
	
	%In some cases, the extremal construction for a multicolor Turán function $\ex_F(n,G)$ does not contain subgraphs isomorphic to $G$ at all \cite{AS}.
	
	The paper is organised as follows. In Section 2 we discuss a result of Alon and Shapira, which we will use for the lower bound in Theorem \ref{K_t K_3}.
	While the
	lower bound for $\ex_{K_3}(n,K_3)$ was based on Behrend’s construction of a large set of integers containing no 3-term
	arithmetic progression, % Alon and Shapira  considered sets of integers that do not contain subsets satisfying a certain set of equations. This structure, obtained via random methods, 
	Erdős, Frankl and Rödl \cite{EFR} proved, and later Alon and Shapira further extended \cite{Alon_Shapira} a generalization of Behrend's result for integer sets which avoid any triples from a $k$-term arithmetic progression. This result
	enables us to 
	construct a dense packing of cliques without a multicolor $K_3$. 
	
	Section 3 is devoted to the case of packing odd cycles without creating a multicolor $K_3$. Here we resolve the case when $F$ is a pentagon and provide lower and upper bounds for longer odd cycles as well. 
	%\todo{Még egy hangsúly arról, hgy ez miért random struktúrák és algoritmusok}
	
	\section{Proof of Theorem \ref{K_t K_3}}
	
	We will introduce the concept of a triple of integers with $h$-limited ratio, first appearing in a paper of Erdős, Frankl and Rödl \cite{EFR}. We present it in a more general framework introduced by Alon and Shapira \cite{Alon_Shapira}.
	
	\begin{defi}[$(k,h)$-gadget, \cite{Alon_Shapira}]
		Let $k\geq 3$, $h\geq 1$ be integers.
		Call a set of $k-2$ linear equations $\mathcal{E} = \{e_1,\ldots,e_{k-2}\}$ with integer coefficients in $k$ unknowns $x_1,\ldots,x_k$ a $(k,h)$-gadget if it satisfies the following properties:
		\begin{itemize}
			\item  Each of the unknowns $x_1,\ldots,x_k$ appears in at least one of the equations.
			\item For $1\le t\le k-2$, equation $e_t$
			is of the form
			$p_tx_i +q_tx_j = (p_t +q_t)x_{\ell},$
			where $0 < p_t,q_t \leq h$, and $x_i,x_j,x_{\ell}$ are distinct.
			\item Equations $e_1,\ldots,e_{k-2}$ are linearly independent.
		\end{itemize}
		We say that $z_1,\ldots,z_k$ satisfy a $(k,h)$-gadget $\mathcal{E}$ if they satisfy the $k-2$ equations of $\mathcal{E}$. Note that any
		gadget $\mathcal{E}$ has a trivial solution $x_1 = \ldots = x_k$.
	\end{defi}
	
	\begin{defi}[$(k,h)$-gadget-free, \cite{Alon_Shapira}]
		A set of integers $Z$ is called $(k,h)$-gadget-free if there are no $k$
		distinct integers $z_1,\ldots,z_k \in Z$ that satisfy an arbitrary $(k,h)$-gadget.
	\end{defi}
	Their main theorem is as follows.
	
	\begin{theorem}[Alon, Shapira, \cite{Alon_Shapira}]\label{ASS}
		For every $h$ and $k$ there is an integer $c = c(k,h)$, such that for every $n$ there is a $(k+1,h)$-gadget-free subset $Z \subset   \{1,2,\ldots,n\}$ of size at least
		$$|Z| \geq n\cdot \exp\left(-c(\log{n})^{\frac{1}{\lfloor\log{2k}\rfloor}}\right),$$ where logarithms are taken in base $2$.
	\end{theorem}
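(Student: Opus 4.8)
The plan is to generalize Behrend's sphere construction so that a single geometric obstruction — strict convexity — simultaneously defeats \emph{every} gadget equation, and then to sharpen the parameters by an iterated (product) version of the construction. The two ingredients are a \emph{linearisation} step that turns the integer equations into coordinatewise identities on digit vectors, and a \emph{convexity} step that kills those identities on a level set.

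First I would fix a base $d$ and a digit bound $m$, and encode each integer $z\in\{0,1,\dots,d^r-1\}$ by its base-$d$ digit vector $\phi(z)=(a_0,\dots,a_{r-1})$ with all digits $a_s\in\{0,\dots,m-1\}$. Choosing $d>2hm$ guarantees that for every admissible pair $0<p,q\le h$ the quantities $p a_s+q b_s$ and $(p+q)c_s$ stay below $d$, so no carries occur; consequently the integer identity $p z_i+q z_j=(p+q)z_\ell$ is \emph{equivalent} to the coordinatewise identity $p\phi(z_i)+q\phi(z_j)=(p+q)\phi(z_\ell)$. This is the crucial point: it reduces each gadget equation to a statement purely about digit vectors, uniformly over the boundedly many coefficient choices allowed by $h$.

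The core step is geometric. Restrict attention to $S_\rho=\{z:\ \|\phi(z)\|^2=\rho\}$, the integers whose digit vectors lie on a fixed sphere. If $p\phi(z_i)+q\phi(z_j)=(p+q)\phi(z_\ell)$ with $p,q>0$, then $\phi(z_\ell)$ is a strict convex combination of $\phi(z_i)$ and $\phi(z_j)$; since the sphere is strictly convex, $\phi(z_\ell)$ lies on it only if $\phi(z_i)=\phi(z_j)=\phi(z_\ell)$, and injectivity of $\phi$ forces $z_i=z_j=z_\ell$. Because a gadget requires every one of its $k+1$ unknowns to appear in some equation, each unknown is thereby forced to coincide with another, so no $k+1$ \emph{distinct} elements of $S_\rho$ can satisfy any $(k+1,h)$-gadget, and every $S_\rho$ is gadget-free. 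A pigeonhole over the at most $r(m-1)^2+1$ possible values of $\rho$ then produces one sphere carrying at least $m^r/(rm^2)$ integers, all inside an interval of length $d^r\approx(2hm)^r$; rescaling to $\{1,\dots,n\}$ yields the desired set.

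Finally I would optimise. Balancing the wasted-base factor $(2h)^r$ against the pigeonhole loss $rm^2$ under the constraint $(2hm)^r\approx n$ already gives $|Z|\ge n\,e^{-O(\sqrt{\log n})}$, which is the statement for small $k$ (where the target exponent is $1/2$). The main obstacle is the sharper exponent $1/\lfloor\log 2k\rfloor$ for larger $k$: here one must exploit that a gadget on many unknowns is \emph{harder to realise}, so a looser convexity constraint suffices. I expect the single sphere to be replaced by an iterated product construction that splits the digit string into about $\log k$ blocks and imposes the convexity obstruction blockwise, so that any surviving gadget must already collapse inside some block; recursing over these $\sim\log k$ levels is exactly what converts the square-root exponent into the $\lfloor\log 2k\rfloor$-th root. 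This is directly analogous to the Rankin-type lower bounds for sets free of $k$-term arithmetic progressions, and controlling the loss at each level — while checking that a genuine $(k+1,h)$-gadget with every unknown occurring cannot survive all levels — is the delicate part of the argument.
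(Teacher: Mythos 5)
Note first that the paper does not prove Theorem \ref{ASS} at all: it is imported verbatim from \cite{Alon_Shapira}, and only its corollary Theorem \ref{AS} (the $q$-limited-ratio case, i.e.\ $(3,h)$-gadget-freeness, exponent $\tfrac12$) is actually used. So your attempt must be judged against the Alon--Shapira proof itself. Your linearisation step (base $d>2hm$ with digits below $m$, so that $pz_i+qz_j=(p+q)z_\ell$ holds over the integers iff it holds coordinatewise on digit vectors) and your sphere step are both correct: since each gadget equation has $0<p_t,q_t$, it expresses $\phi(z_\ell)$ as a strict convex combination, which collapses on a sphere, and the pigeonhole over the $r(m-1)^2+1$ radii with $r\approx\sqrt{\log n}$ gives $n\,e^{-O(\sqrt{\log n})}$. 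This genuinely proves the statement whenever $\lfloor\log 2k\rfloor=2$, and in particular everything this paper needs for Theorems \ref{AS} and \ref{K_t K_3}.

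As a proof of the theorem as stated, however, there is a real gap for $k\ge 4$: there the claimed density $n\,e^{-c(\log n)^{1/\lfloor\log 2k\rfloor}}$ is strictly \emph{larger} than the $n\,e^{-c(\log n)^{1/2}}$ that any single-sphere construction can deliver, precisely because the sphere destroys every individual gadget equation --- overkill that caps you at Behrend density. Your proposed fix is both unexecuted and, as described, misdirected: imposing a sphere-type constraint separately on $\sim\log k$ digit blocks still kills each equation individually, so it cannot improve the $\sqrt{\log n}$ exponent; the gain must come from a set that \emph{tolerates} single equations while forbidding the full system. The missing ingredient is the linear-independence hypothesis of the gadget, which your argument never invokes: $k-1$ independent equations of this shape in $k+1$ unknowns leave a solution space of dimension exactly $2$ containing the all-ones vector, and applying this coordinatewise shows that the digit vectors satisfy $\phi(z_t)=a+\tau_t b$ for fixed $a,b$ and bounded rational parameters $\tau_t$ --- all $k+1$ points are \emph{collinear}. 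Along a line the squared norm is a strictly convex function of $\tau$, so among $k+1$ collinear points at least $\lceil (k+1)/2\rceil$ have strictly monotone squared norms; one then restricts the admissible squared norms to a recursively constructed (Rankin-type) set of radii avoiding the corresponding pattern of roughly half the length. It is this halving recursion on the \emph{radii} (annuli), not on digit blocks, iterated $\sim\log_2 k$ times, that produces the exponent $\tfrac{1}{\lfloor\log 2k\rfloor}$; controlling the density loss per level is routine once collinearity is in hand. Without this step your proposal proves a correct but strictly weaker bound for $k\ge4$.
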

	
	We consider only the subcase when $k=3$. Note that in this case, the first and third condition hold automatically, hence $(3,h)$-gadgets can also be defined as follows:
	
	\begin{defi}[$h$-limited ratio]
		We say that a triple of integers $(a,b,c)$ is a triple with $h$-limited ratio if they satisfy an equation of the form $\frac{\lambda a+\mu b}{\lambda+\mu}=c$ for some positive integers $\lambda, \mu\le h$.
	\end{defi}
	
	Note that the case $h=1$ describes precisely the arithmetic progressions of length $3$. As a corollary of Theorem \ref{ASS}, we get back the theorem of Erdős, Frankl and Rödl \cite{EFR}.
	
	\begin{theorem}[]\label{AS} Fix a positive integer $h$.
		It is possible to choose a subset $A$ of $\{1,2,\dots,n\}$ of size  $n\cdot e^{-O(\sqrt{\log n})}$ which does not contain a triple with $h$-limited ratio. 
	\end{theorem}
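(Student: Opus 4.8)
The plan is to observe that a triple with $q$-limited ratio is nothing but a solution of a single Alon--Shapira gadget equation, and then to feed the correct parameters into Theorem \ref{ASS}. Rewriting the defining relation $\frac{\lambda a+\mu b}{\lambda+\mu}=c$ in the cleared form
$$\lambda a+\mu b=(\lambda+\mu)c,$$
we see it is exactly of the prescribed shape $p_t x_i+q_t x_j=(p_t+q_t)x_\ell$ from the definition of a gadget equation, with $p_t=\lambda$, $q_t=\mu$, and $(x_i,x_j,x_\ell)=(a,b,c)$. Since $0<\lambda,\mu\le q$, setting $h=q$ respects the coefficient bound, so this is a single $(3,q)$-gadget (three unknowns, $3-2=1$ equation).

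First I would verify the three defining conditions for this single equation: every unknown $a,b,c$ occurs (the one equation involves all of them), the equation has the required averaging form (just checked), and linear independence of a single nonzero equation is automatic. I would also reconcile the distinctness clause in the definition of gadget-freeness with the notion of a $q$-limited-ratio triple: whenever $a\ne b$, the value $c=\frac{\lambda}{\lambda+\mu}a+\frac{\mu}{\lambda+\mu}b$ is a strict convex combination of $a$ and $b$ with positive weights, hence lies strictly between them, forcing $a,b,c$ to be pairwise distinct. Thus a nontrivial $q$-limited-ratio triple is precisely a triple of three distinct integers satisfying a $(3,q)$-gadget, and conversely a $(3,q)$-gadget-free set contains no such nontrivial triple.

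It then remains to apply Theorem \ref{ASS}. Since that theorem produces a $(k+1,h)$-gadget-free set, to obtain a $(3,q)$-gadget-free set I would take $k=2$ and $h=q$. This yields a subset $Z\subset\{1,\dots,n\}$ of size at least $n\,e^{-c(\log n)^{1/\lfloor\log 4\rfloor}}$. With the base-$2$ logarithm of Alon and Shapira one has $\lfloor\log 4\rfloor=2$, so the exponent is $(\log n)^{1/2}$ and $|Z|\ge n\,e^{-c\sqrt{\log n}}=n\,e^{-O(\sqrt{\log n})}$. Taking $A=Z$ finishes the argument, since $Z$ is free of nontrivial $q$-limited-ratio triples.

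The main obstacle is not conceptual but a matter of careful bookkeeping, and two points deserve attention. First, because Theorem \ref{ASS} outputs a $(k+1,h)$-gadget-free set, one must invoke it with $k=2$ (not $k=3$), the parameter sitting one below the gadget size. Second, the floor in the exponent is sensitive to the base of the logarithm: with the base-$2$ convention one gets $\lfloor\log 4\rfloor=2$, hence exponent $\tfrac12$ and the claimed $e^{-O(\sqrt{\log n})}$ bound, whereas an incautious reading with the natural logarithm would give $\lfloor\log 4\rfloor=1$ and only a polynomially small set. Confirming the intended convention is the one genuinely delicate step.
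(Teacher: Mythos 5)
Your proposal is correct and matches the paper's (very terse) derivation: the paper likewise treats a $q$-limited-ratio triple as a single gadget equation in three unknowns, notes that the "all unknowns appear" and linear-independence conditions are automatic, and invokes Theorem \ref{ASS} to get the $ne^{-O(\sqrt{\log n})}$ bound. Your extra care about the distinctness of $a,b,c$ and about which parameter value to feed into Theorem \ref{ASS} only makes explicit what the paper leaves implicit.
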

	
	This result has been applied also by Gowers and Janzer in a closely related problem \cite{Gowers-Janzer} which also generalises the Ruzsa-Szemerédi theorem.   Let $r<t$ be positive integers and $G$ be a graph on $n$ vertices such
	that any of its subgraphs isomorphic to $K_r$ is contained in at most one subgraph isomorphic to $K_t$. Gowers and Janzer studied the largest number of copies of $K_t$ that $G$ can contain. Note that when $r=2$ and $t\geq 3$, if we take edge-disjoint copies of $K_t$ which avoid a multicolor $K_3$, then there will not be any copy of $K_t$ different from the original ones, so each copy of $K_2$ will be contained in exactly one copy of $K_t$.%Note that when $r=2$ and $t\ge 3$, if we take edge-disjoint copies of $K_t$ which avoid a multicolor $K_3$, then there will not be any copy of $K_t$ different from the original ones.\\
	
	In order to construct a large set of edge-disjoint copies of $K_t$ which avoid a multicolor $K_3$, will apply Theorem \ref{AS} with $h=t-2$.
	Take such a subset $A$,  and consider the following $t$-partite graph on $|V|=\frac{t(t+1)}{2}n$ vertices.
	
	%can be used to prove Theorem \ref{K_t K_3}.
	
	\begin{construction}\label{main_const}
		Let $V=\{(i,\ell): 1\le i\le t, 1\le \ell\le in\}$. Consider the set $\mathcal{S}$ of all $t$-element subsets of $V$ defined as $S_{\ell,k}=\{(1,\ell), (2,\ell+k), (3,\ell+2k), \dots, (t,\ell+(t-1)k)\}$ where $1\le \ell\le n$ and $k\in A$. For each element of $\mathcal{S}$, we take a copy of $K_t$ induced  by the $t$ corresponding vertices. Let $E$ be the union of the edge sets of these complete graphs $K_t$. 
	\end{construction}
	
	\begin{prop} 
		Construction \ref{main_const} defines a simple graph $(V,E)$ with a $K_t$ decomposition which
		\begin{itemize}
			\item does not contain a multicolor $K_3$
			\item  contains $ n^2e^{-O(\sqrt{\log n})}$ distinct copies of $K_t$.
		\end{itemize}
	\end{prop}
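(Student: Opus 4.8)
The plan is to verify the three claims about Construction~\ref{main_const} in turn: that the graph is simple, that its $K_t$-copies are edge-disjoint and contain no multicolor triangle, and that there are the stated number of copies. Let me think through each piece.

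First I would establish that $\mathcal{S}$ has the right cardinality and that the $K_t$-copies are edge-disjoint, yielding a genuine decomposition. Each element of $\mathcal{S}$ is determined by the pair $(\ell,k)$ with $1\le \ell\le n$ and $k\in A\cap\{1,\dots,n\}$, and distinct pairs give distinct vertex sets (the first coordinate $(1,\ell)$ already pins down $\ell$, after which $(2,\ell+k)$ pins down $k$). Since $|A|=ne^{-O(\sqrt{\log n})}$, the number of copies is $n\cdot|A|=n^2e^{-O(\sqrt{\log n})}$, giving the second bullet. For edge-disjointness I would argue that any edge lies in at most one copy: an edge joins $(i,x)$ to $(j,y)$ with $i<j$, and within a copy indexed by $(\ell,k)$ this edge forces $x=\ell+(i-1)k$ and $y=\ell+(j-1)k$, so $k=(y-x)/(j-i)$ and $\ell=x-(i-1)k$ are uniquely determined. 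This simultaneously shows the graph is simple (no edge is drawn twice) and that the copies form a decomposition.

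The main obstacle, and the heart of the proof, is the no-multicolor-$K_3$ property. Here I would suppose for contradiction that three vertices $u,v,w$ form a triangle whose three edges come from three \emph{distinct} copies of $K_t$. A multicolor triangle on three vertices of the $t$-partite graph must have its vertices in three distinct parts, say parts $i<j<m$, with vertices $(i,a)$, $(j,b)$, $(m,c)$. The edge $\{(i,a),(j,b)\}$ lying in some copy forces $b-a=(j-i)k_1$ for that copy's common difference $k_1\in A$; similarly $c-b=(m-j)k_2$ and $c-a=(m-i)k_3$ with $k_2,k_3\in A$. Adding the first two equations and comparing with the third gives a linear relation among $k_1,k_2,k_3$ with coefficients $j-i$, $m-j$, $m-i$, all lying in $\{1,\dots,t-1\}$. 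I would then show this relation says precisely that $k_1,k_2,k_3$ form a triple with $q$-limited ratio for $q=t-2$: rearranging yields $\tfrac{(j-i)k_1+(m-j)k_2}{(j-i)+(m-j)}=k_3$, an equation of the required form with weights $\lambda=j-i$ and $\mu=m-j$ satisfying $1\le\lambda,\mu\le m-i-1\le t-2=q$. If the three colours are genuinely distinct the $k_i$ cannot all coincide, so this is a \emph{nontrivial} solution, contradicting the choice of $A$ via Theorem~\ref{AS}.

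The delicate point to get right is the edge case where some of the $k_i$ coincide or where only two distinct copies meet the triangle; I would check that the configuration is forbidden exactly when the three edges carry three distinct colours, i.e.\ exactly when the $k_i$ are not all equal, and confirm that the weights never degenerate (both $\lambda,\mu\ge 1$ since $i<j<m$). Once the correspondence between multicolor triangles and $q$-limited-ratio triples is pinned down, the first bullet follows immediately from the defining property of $A$, completing the proof.
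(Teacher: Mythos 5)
Your argument is correct and follows essentially the same route as the paper: recover $(\ell,k)$ from any edge to get simplicity and edge-disjointness, count $n\cdot|A|$ copies, and turn a multicolor triangle into the relation $\tfrac{(j-i)k_1+(m-j)k_2}{(j-i)+(m-j)}=k_3$ with weights in $\{1,\dots,t-2\}$. The only point to nail down in your flagged ``delicate case'' is that Theorem~\ref{AS} forbids triples of \emph{pairwise distinct} elements, not merely non-constant ones; this is fine because the convex relation forces any two equal slopes to make all three equal (so the three vertices lie in a single $K_t$ and the triangle is not multicolor), which is exactly the observation the paper uses.
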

	\begin{proof}
		
		First observe that for any two vertices $v\ne v'\in V$, there is at most one pair of parameters $\ell$ and $k$ such that $v,v'\in S_{\ell,k}$, hence the edge sets of our copies of $K_t$ are indeed disjoint.\\
		Suppose to the contrary that the graph does contain a multicolor $K_3$. If $(i,x)$, $(j,y)$ and $(k,z)$ are the vertices of a multicolor triangle where $i<j<k$, then $a:=\frac{y-x}{j-i}$, $b:=\frac{z-y}{k-j}$ and $c:=\frac{z-x}{k-i}$ must be all distinct. Indeed,  if two of these 'slopes' were the same then actually the three vertices would all be in the same $K_t$.
		Hence these are three distinct elements of $A$ satisfying $(j-i)a+(k-j)b=(k-i)c$, where $1\le j-i,k-j\le t-2$, which is a contradiction to $A$ being  free of triples with $(t-2)$-limited ratio. \\ Finally note that as we have a $t$-set $S_{\ell, k}$ for each $\ell \in [1,n]$ and $k\in A$, the graph contains $ n^2e^{-O(\sqrt{\log n})}$ distinct copies of $K_t$ according to Theorem \ref{AS}.
	\end{proof}

	\section{Odd cycles, Proof of Theorem \ref{penta}}
	We start this section with the proof of Theorem \ref{penta} and prove that the maximum number of disjoint pentagons is $\left(\frac{1}{25}+o(1)\right)n^2$ in a multicolor triangle free graph.
	
	Consider the graph $H$ on $n$ vertices, which is obtained as the union of edge-disjoint pentagons $C_5^{(1)}, \ldots, C_5^{(t)}$, and suppose that $H$ contains  no multicolor triangles. Let $d(v)$ denote the degree of vertex $v$ in $H$. 
	Take the double sum $$\sum_{i=1}^t \sum_{v\in V(C_5^{(i)})} d(v).$$
	
	On the one hand, it is easy to see that the contribution of every vertex $v$ is exactly $\frac{d(v)^2}{2}$ since the edges incident to $v$ are distributed between $d(v)/2$ pentagons. Hence $$\sum_{i=1}^t \sum_{v\in V(C_5^{(i)})} d(v)= \frac{1}{2}\sum_{v\in V(H)} d^2(v)\geq \frac{n}{2}\left(\frac{2\cdot |E(H)|}{n}\right)^2= \frac{50t^2}{n}.$$
	by the QM-AM inequality.\\
	
	On the other hand,  $N(u)\cap N(v)=\emptyset$ holds for an edge $uv\in E(C_5^{(i)})$, unless there exists a vertex
	\begin{enumerate}[(a)]
		\item $w\in N(u)\cap N(v)$ such that $uw\in  E(C_5^{(i)})$ or $wv\in  E(C_5^{(i)})$, or
		\item $z\in N(u)\cap N(v)$ such that $uz\in  E(C_5^{(j)})$ and $zv\in  E(C_5^{(j)})$ for some $j\neq i$.
	\end{enumerate}
	
	Indeed, $H$ does not contain a multicolor triangle thus if $N(u)$ and $ N(v)$ are not disjoint, then their common neighbour, together with $u$ and $v$, must span 2 edges of the same color. The number of common neighbours $w$ of type (a) is at most $2$. The number of common neighbours $z$ of type (b) will be denoted by $N^*(uv)$. 
	
	\begin{lemma}\label{count}For every $1\leq i \leq t$,
		$$\sum_{v\in V(C_5^{(i)})} d(v)\le 2n + 10+ \sum_{uv\in E(C_5^{(i)})} N^*(uv).$$
	\end{lemma}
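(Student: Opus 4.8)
The plan is to prove the inequality not by summing a two-vertex degree bound over the five edges of $C_5^{(i)}$ (which, as I explain below, is too wasteful), but by reorganising the left-hand side as a sum over all vertices of $H$. Writing $f(w) := |N(w) \cap V(C_5^{(i)})|$ for the number of pentagon vertices adjacent to $w$, switching the order of summation gives
$$\sum_{v\in V(C_5^{(i)})} d(v) = \sum_{v\in V(C_5^{(i)})} |N(v)| = \sum_{w\in V(H)} f(w),$$
so it suffices to bound $\sum_{w} f(w)$. I would then split each term as $f(w) = \min(f(w),2) + (f(w)-2)^+$, where $(x)^+ := \max(x,0)$. The first pieces contribute at most $2$ per vertex, hence $\sum_{w} \min(f(w),2) \le 2n$, which already accounts for the main term $2n$. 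Everything then reduces to controlling the total excess $\sum_{w}(f(w)-2)^+$, and the goal becomes showing this is at most $10 + \sum_{uv} N^*(uv)$.

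For the excess I would split the vertices $w$ according to whether or not they lie on the pentagon. If $w\in V(C_5^{(i)})$, then $f(w)\le 4$ (a pentagon vertex has at most the other four as neighbours), so $(f(w)-2)^+ \le 2$, and summing over the five pentagon vertices gives at most $10$. For $w\notin V(C_5^{(i)})$ the key structural observation is that the neighbourhood $S := N(w)\cap V(C_5^{(i)})$ is a subset of a $5$-cycle, and since $\alpha(C_5)=2$, a short arc-counting argument on the cycle shows that the number of \emph{pentagon edges} with both endpoints in $S$ is at least $|S|-2 = f(w)-2$ whenever $f(w)\ge 3$. Each such edge $uv$ has $w$ as a common neighbour lying outside $C_5^{(i)}$; because $H$ is multicolor-triangle-free and $w\notin V(C_5^{(i)})$, neither $uw$ nor $vw$ can have colour $i$, which forces $uw,vw$ to share a colour $j\ne i$, i.e. $w$ is a common neighbour of type $(b)$ for the edge $uv$. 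Charging each unit of excess of $w$ to such an edge and summing over $w$ therefore yields $\sum_{w\notin V(C_5^{(i)})}(f(w)-2)^+ \le \sum_{uv\in E(C_5^{(i)})} N^*(uv)$.

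Combining the three bounds gives exactly $\sum_{v} d(v) \le 2n + 10 + \sum_{uv} N^*(uv)$. The step I expect to be the main obstacle — or at least the point where the naive approach fails — is obtaining the sharp leading constant $2$ rather than $5/2$: the edge-by-edge estimate $d(u)+d(v)\le n+2+N^*(uv)$ implicit in the discussion preceding the lemma, summed over the five edges and divided by two (each vertex lying on two pentagon edges), only delivers $\tfrac{5}{2}n + 5 + \tfrac12\sum_{uv} N^*(uv)$. The reorganisation into a per-vertex sum, together with the observation that each external vertex ``wastes'' at most two units before it must create a genuine type-$(b)$ configuration, is precisely what recovers the correct coefficient $2$, which is tight for the extremal blow-up $C_5[n/5]$.
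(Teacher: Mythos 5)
Your proof is correct and follows essentially the same route as the paper's: both rewrite the sum as $\sum_{w\in V(H)}|N(w)\cap V(C_5^{(i)})|$, use $\alpha(C_5)=2$ to argue that any vertex with three or more pentagon-neighbours must be a common neighbour of a pentagon edge and hence special of type $(a)$ or $(b)$, and charge the excess accordingly (your bound of $2$ excess per pentagon vertex and the paper's count of endpoints of the five complement edges both yield the $+10$). Your write-up is simply a more explicit version of the paper's two-sentence argument, including the arc-counting step showing $S$ spans at least $|S|-2$ cycle edges, which the paper leaves implicit.
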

	
	\begin{proof}
		Let us index the vertices along the cycle $C_5^{(i)}$, i.e., $V(C_5^{(i)})=\{v_1, v_2, v_3, v_4, v_5\}$. 
		
		The left hand side counts the edges $uw$ which have an endvertex $v$ from $V(C_5^{(i)})$. If an edge has both endpoints in this set, it is counted twice.
		
		%The left hand side corresponds to the number of ordered pairs $(v,x)$ where $v\in V$, $x\in V(H)$ and $vx\in E(H)$. Let us count the number of such pairs by iterating through the vertices $x$: $\sum\limits_{v\in V} d(v)=\sum\limits_{x\in V(H)} d_V(x)$ where $d_V(x)=|N(x)\cap V|$.
		
		We distinguish two cases.\\ If $w\in V(C_5^{(i)})$, then $w$ contributes $2$ to the sum  for its two neighbours on the cycle, and contributes  at most $2$ for its non-neighbours on the cycle. \\
		Suppose that $w\not\in V(C_5^{(i)})$. Then $w$ cannot be joined to consecutive vertices along $C_5^{(i)}$, unless the edges connecting $w$ and the consecutive vertices $v_{\ell}, v_{\ell+1}$ are of the same color, i.e., coming from the same copy $C_5^{(j)}$.  (Here $v_6\equiv v_1$.) If we label $v_{\ell+1}$ in each such consecutive pair which is connected to $w$ with the same color, then for each $w$ its unlabelled neighbours form an independent set within the pentagon $C_5^{(i)}$, otherwise a multicolor triangle would be formed. Hence the cardinality of the unlabelled neighbours of $w$ is most $2$. On the other hand, the labelled neighbours are in one-to-one correspondence with ${\sum_{uv\in E(C_5^{(i)})} N^*(uv).}$ All in all, we get at most $5\cdot 4+(n-5)\cdot 2 +{\sum_{uv\in E(C_5^{(i)})} N^*(uv).}$ The bound thus follows.\end{proof}

	We apply Lemma \ref{count} to obtain 
	
	$$\sum_{i=1}^t \sum_{v\in V(C_5^{(i)})} d(v)\leq (2n+10)t+\sum_{i=1}^t\sum_{uv\in E(C_5^{(i)})} N^*(uv).$$
	
	To bound $\displaystyle{\sum_{i=1}^t\sum_{uv\in E(C_5^{(i)})} N^*(uv)},$ observe that when we count the common neighbours of type (b), each common neighbour is a vertex $z$ of a pentagon $C_5^{(j)}$ for some $j\in \{1, \ldots, t\}$ and every $z$ appears at most once, namely for a possible edge $uw$ where $u$ and $w$ are neighbours of $z$ in  $C_5^{(j)}$. Consequently $$\sum_{i=1}^t\sum_{uv\in E(C_5^{(i)})} N^*(uv)\leq 5t.$$
	Putting this all together, we get 
	$$ \frac{50t^2}{n}\le \sum_{i=1}^t \sum_{v\in V(C_5^{(i)})} d(v)\leq  (2n+15)t,$$
	which in turn implies 
	$$t\le \frac{n^2}{25}+O(n). \eqno(*)$$ \qed
	
	\begin{remark}
		Note that a linear term is indeed needed in $(*)$, as for $n=5$ one can decompose $K_5$ into two edge-disjoint pentagons, so $C_5[1]$ does not provide an extremal construction for $\ex_{C_5}(n=5, K_3)$.
	\end{remark}
	
	Observe that $C_5[n/5]$ is the extremal construction for the maximum number of pentagons in an $n$-vertex triangle-free graph, as conjectured by Erdős in 1984 \cite{Erdos84} and resolved by Grzesik \cite{Grzesik} and independently by Hatami, Hladk\'y, Král', Norine and Razborov \cite{Hatami}. This problem can be stated as a generalised Turán problem: $\ex(n,H,G)$  denotes the maximum number of copies of $H$ in $G$-free graphs on $n$ vertices.  Recently, Grzesik and Kielak \cite{G+K} investigated $\ex(n,C_k,C_l)$ for odd integers $k>l$, and conjectured that  $\ex(n,C_k,C_l)$ is asymptotically attained at the balanced blow-up of an $(l + 2)$-cycle, moreover they proved it for $k=l+2\geq 3$ as well. %In fact this is very closely related to the problem  of finding the maximum number of induced cycles of a given length \cite{Pip}.
	
	\begin{proof}[Proof of Theorem \ref{oddlarge}, upper bound]
		Following the same lines as the proof of Theorem \ref{penta}, on the one hand we get
		
		$$   \frac{2(2k+1)^2t^2}{n} \le \sum_{i=1}^t \sum_{v\in V(C_5^{(i)})} d(v),
		$$
		whereas on the other hand, the analogue of Lemma \ref{count} gives
		$$\sum_{v\in V(C_{2k+1}^{(i)})} d(v)\le n\alpha(C_{2k+1})+2|V(C_{2k+1}^{(i)})|+\sum_{uv\in E(C_{2k+1}^{(i)})} N^{*}(uv)$$
		(for each $1\le i\le t$) which means
		$$\sum_{v\in V(C_{2k+1}^{(i)})} d(v)\le kn+2(2k+1)+\sum_{uv\in E(C_{2k+1}^{(i)})} N^{*}(uv).$$
		Since we have
		$$\sum_{i=1}^t \sum_{uv\in E(C_{2k+1}^{(i)})} N^{*}(uv)\le (2k+1)t,$$
		altogether we get 
		$$\frac{2(2k+1)^2t^2}{n}\le (kn+3(2k+1))t$$
		giving
		$$t\le \frac{k}{2(2k+1)^2}n^2+O(n).$$
		Therefore
		\begin{equation*}\ex_{C_{2k+1}}(n,K_3)\leq (1+o(1))\frac{k}{2(2k+1)^2} n^2,\end{equation*}  confirming the upper bound for the second statement in Theorem \ref{oddlarge}.
	\end{proof} 
	
	As we have seen before, the $C_5$-decomposition of $C_5[n/5]$ provides an asymptotically extremal construction for $\ex_{C_{2k+1}}(n,K_3)$ when $k=2$.
	For the general problem $\ex_{C_{2k+1}}(n,K_3)$ $k\geq 2$, an analogous construction would be to decompose the blow-up of $C_{2k+1}[n/(2k+1)]$; however this blow-up would be a much sparser graph than the blow-up of $C_{5}[n/5]$. Thus, 
	it would be natural to consider an asymptotic decomposition of $C_5[n/5]$, if it exists, to cycles $C_{2k+1}$ for $k>2$ for the lower bound on $\ex_{C_{2k+1}}(n,K_3)$.
	%\cdot |E(C_{2k+1})|>(1+o(1))\frac{n^2}{5}$,
	This would be also in the spirit of the conjecture of Grzesik and Kielak concerning the generalised Turán number of odd cycle pairs. That decomposition indeed exists as we will see below, and provides $(1+o(1))\frac{1}{5(2k+1)} n^2$ $(2k+1)$-cycles, but it is  below the stated lower bound $(1+o(1))\frac{k}{2(2k+1)^2} n^2.$ As detailed below, we achieve a better construction by breaking the symmetry on the class sizes of the blow-up.\\
	
	We continue by proving Theorem \ref{oddlarge}, and  we introduce some necessary lemmas first. Let us recall the Haxell-Rödl theorem. In order to state it, we will have to introduce some notions. 
	Let $F$ and $G$ be two graphs. We denote by ${G \choose H}$ the set of all subgraphs of $G$ isomorphic to $H$. We call a subset $\mathcal{A}$ of ${G \choose H}$ an $H$-packing if any two graphs $A,B \in \mathcal{A}$ are edge-disjoint. Let $\nu_H(G)$ be the size of the largest $H$-packing. A fractional $H$-packing is a function $\psi: {G \choose H} \to [0,1]$ such that for each edge $e \in E(G)$ we have $\sum_{e \in H' \in {G \choose H}} \psi(H') \leq 1$. Let $|\psi|=\sum_{H' \in {G \choose H}} \psi(H')$. We say that a fractional $H$-packing is maximal if $|\psi|$ is maximal and  denote this maximum by $\nu_H^*(G).$
	
	\begin{theorem}[Haxell--Rödl] \label{HR}
		Let $H$ be a fixed graph and let an $\eta>0$ be given. Then there exists an integer $N$ such that for any graph $G$ on $n \geq N$ vertices we have
		$$\nu_H^*(G)-\nu_H(G) \leq \eta n^2.$$
	\end{theorem}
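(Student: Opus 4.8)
The plan is to recast the statement as a comparison of the integral and fractional matching numbers of an auxiliary hypergraph and then to round a near-optimal fractional packing into an integral one, combining Szemerédi's Regularity Lemma with a semi-random (nibble) argument. Set $r=e(H)$ and let $\mathcal{H}$ be the $r$-uniform hypergraph with vertex set $E(G)$ whose hyperedges are the edge sets of the copies of $H$ in $G$; then an $H$-packing is precisely a matching in $\mathcal{H}$ and a fractional $H$-packing is a fractional matching, so $\nu_H(G)$ and $\nu_H^*(G)$ are the matching and fractional matching numbers of $\mathcal{H}$. Since $\nu_H(G)\le\nu_H^*(G)$ always holds, it suffices to exhibit an integral packing of size at least $\nu_H^*(G)-\eta n^2$.

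First I would apply the Regularity Lemma to obtain an equitable partition $V(G)=V_0\cup V_1\cup\dots\cup V_m$ into a bounded number of clusters in which all but an $\epsilon$-fraction of the pairs are $\epsilon$-regular, and then clean $G$ by deleting every edge inside a cluster, inside an irregular pair, or inside a pair of density below a threshold $d$. This destroys only $O((\epsilon+d)n^2)$ edges; since deleting a single edge lowers each of $\nu_H$ and $\nu_H^*$ by at most $1$, both quantities change by $O((\epsilon+d)n^2)$, and it suffices to work in the cleaned graph $G'$. In $G'$ every copy of $H$ is supported on a tuple of at most $v(H)$ clusters whose relevant pairs are $\epsilon$-regular and have density at least $d$.

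The heart of the argument is a local rounding statement for each such dense, regular configuration of clusters. Using a counting lemma I would show that in a dense regular tuple almost every surviving edge lies in $\Theta(n^{v(H)-2})$ copies of $H$ while any two fixed edges lie together in only $o(n^{v(H)-2})$ of them; that is, the restriction of $\mathcal{H}$ to the configuration is nearly regular with codegrees of smaller order. This is exactly the regime in which the fractional and integral matching numbers of a uniform hypergraph agree asymptotically, so a weighted semi-random (nibble) argument, in the spirit of the Pippenger--Spencer and Frankl--Rödl theorems, yields an integral $H$-packing whose size matches, up to an $o(n^2)$ error, the fractional weight that a maximum fractional packing of $G'$ places on this configuration. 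Summing the local errors over the bounded number of configuration types and adding the $O((\epsilon+d)n^2)$ lost in the cleaning step produces a total deficit below $\eta n^2$ once $\epsilon$ and $d$ are tuned, giving the claimed bound.

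I expect the local rounding of the third paragraph to be the main obstacle. One must check that $\epsilon$-regularity of the pairs really delivers the near-regularity and small-codegree hypotheses needed to run the nibble, which rests on the counting lemma for the number of $H$-copies through one or two prescribed edges, and one must couple the random greedy process to the fractional weights so that the integral packing realizes, rather than merely lies below, the prescribed fractional value. Making these error terms uniform across all configuration types, and handling copies of $H$ whose vertices fall into the clusters with various multiplicities, is where the technical effort concentrates.
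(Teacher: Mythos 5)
This statement is not proved in the paper at all: it is the Haxell--R\"odl theorem, quoted from their 2001 \emph{Combinatorica} paper (reference [Haxell--R\"odl] in the bibliography) and explicitly ``recalled'' in Section~3 to be used as a black box in the proof of Theorem~1.7. So there is no internal proof to compare against; what you have written is essentially a reconstruction of the architecture of Haxell and R\"odl's own published argument (and of its later simplifications): build the auxiliary $e(H)$-uniform hypergraph on $E(G)$, regularize, clean, reduce to dense regular tuples, and round the fractional packing by hypergraph matching theory.

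As a standalone proof, however, your sketch has one genuine gap, which you partly flag yourself. In the third paragraph you pass from ``the auxiliary hypergraph restricted to a configuration is nearly regular with small codegrees'' to ``its fractional and integral matching numbers agree asymptotically.'' The Pippenger--Spencer/Frankl--R\"odl nibble produces a near-perfect matching of a nearly regular hypergraph with small codegrees; it does not by itself round an \emph{arbitrary} fractional matching, and a maximum fractional $H$-packing of $G'$ may concentrate its weight very unevenly over the copies within a configuration, so near-regularity of the hypergraph is not the relevant hypothesis. The tool that actually closes this step is the weighted/LP version of the nibble (Kahn's linear-programming perspective on the Frankl--R\"odl--Pippenger theorem), whose hypothesis is a smallness condition on the pair-weights $\sum_{A\ni x,y}\psi(A)$ of the fractional matching itself; verifying that condition requires first spreading the fractional packing out, e.g.\ by averaging each unit of weight over the many $H$-copies supplied by the counting lemma, and that smoothing step is where the regularity is really consumed. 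Without naming and applying such a result, the central step of your argument remains an assertion. The surrounding reductions are all sound: the translation to matchings in the auxiliary hypergraph, the observation that deleting one edge of $G$ decreases both $\nu_H$ and $\nu_H^*$ by at most $1$ (so cleaning costs only $O((\epsilon+d)n^2)$), and the fact that only boundedly many configuration types need to be summed over.
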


	%\todo{a Grzesik-féle gene Turán problem for odd cycles esettel összevetni. sejtéseket megfogalmazni. Beírni még az introba a Rödl Haxellt és alkalmazni amivel látszódik az alsó korlát nagyobb ptn körök esetére}
	
	\begin{comment}
	%%%%%%%%%%%%%%%%%%%%%%%%%%%%%%%%%%%%%%%%%%%%%%%%%%%%%%
	\begin{lemma}\label{tortkonst} Let $k\geq3$ an integer and fix some $\alpha, \beta \in \mathbb{Q}$ such that $1/2 \geq \alpha \geq \beta\geq 0$. Suppose that $H_{\alpha, \beta}$ is a graph on $n$ vertices such that $V(H_{\alpha, \beta})=A_1\dot\cup A_2\dot\cup C$, $B_i\subseteq A_i$ for $i\in \{1,2\}$, with $|B_i|=\beta\cdot n$, $|A_i|=\alpha\cdot n$, $|C|=(1-2\alpha)n$ and the edges are joining every pair of vertices from $B_i$ and $C$, $A_1\setminus B_1$ and $A_2$, $A_2\setminus B_2$ and $A_1$, see Figure \ref{fig}.
	If  $$ (i) \ \ \frac{\alpha^2-\beta^2}{2\beta(1-2\alpha)}=\frac{2k-1}{2},$$ or 
	$$ (ii) \ \ \ \alpha=2\beta=0.2,$$ then
	there is a perfect fractional $C_{2k+1}$ packing of $H_{\alpha, \beta}$, i.e., 
	$\nu_H^*(G)=\frac{|E(H_{\alpha, \beta})| }{2k+1}.$
	\end{lemma}
	Note that $H_{0.4, 0.2}=C_5[n/5]$.

	\begin{prop} Let $k>2$ integer.
	Suppose that $$\beta=\frac{-(2k-1)(1-2\alpha)+\sqrt{(2k-1)^2(1-2\alpha)^2+4\alpha^2}}{2},$$
	
	then we can choose $\alpha\in [0,0.5]$ such that 
	\begin{itemize}
	\item $|E(H_{\alpha, \beta})|>|E(H_{0.4, 0.2})|$,
	\item $\lim_{k\to \infty}\frac{|E(H_{\alpha, \beta})|}{n^2} \to \frac{1}{4}.$
	\end{itemize}
	\end{prop}
	
	\end{comment}
	%%%%%%%%%%%%%%idáig kommentelve
	
	\begin{nota}
		For two disjoint sets $X$ and $Y$, $E[X,Y]$ refers to a complete bipartite graph spanned by the vertex classes $X$ and $Y$.
	\end{nota}
	
	\begin{construction}[Unbalanced blow-ups]
		Let $\alpha, \beta, \gamma \in \mathbb{Q}^+$ such that $2\alpha+2\beta+\gamma=1$. Define the unbalanced blow-up graph $G_{\alpha, \beta, \gamma}(n)$ of $C_5$, on $n$ vertices, with parameters $\alpha, \beta, \gamma$ such that 
		\begin{itemize}
			\item $V(G_{\alpha, \beta, \gamma}(n))=A_1\dot\cup A_2\dot\cup B_1 \dot\cup B_2\dot\cup C,$
			\item $\displaystyle{E(G_{\alpha, \beta, \gamma}(n))=E[A_1,A_2]\cup E[A_1,B_1]\cup E[B_1,C]\cup E[B_2,C]\cup E[A_2, B_2]},$
			\item $|A_i|=\alpha\cdot n$, $|B_i|=\beta\cdot n$, $|C|=\gamma\cdot n$.
		\end{itemize}
	\end{construction}
	
	\noindent Note that $G_{0.2, 0.2, 0.2}(n)$ is simply $C_5[n/5].$\\
	\begin{proof}[Proof of Theorem \ref{oddlarge}, lower bound]
		
		Fix an integer $k\geq 3$. Let $\mathcal{C}_A$ be the family of all $C_{2k+1}$ subgraphs of $G_{\alpha, \beta, \gamma}$ which have exactly $2k-3$ edges in $E[A_1,A_2]$.
		Give each element $C_{2k+1}\in \mathcal{C}_A$ a uniform weight
		$$\psi(C_{2k+1})=\frac{\delta}{|\mathcal{C}_A|}.$$
		Similarly, let $\mathcal{C}_{A_iB_i}$ be the family of all $C_{2k+1}$ subgraphs of $G_{\alpha, \beta, \gamma}$ which have exactly $2k-3$ edges in $E[A_i,B_i]$ for some $i\in \{1,2\}$.
		Give each element $C_{2k+1}\in \mathcal{C}_{A_iB_i}$ a uniform weight 
		$$\psi(C_{2k+1})=\frac{\mu}{|\mathcal{C}_{A_iB_i}|}.$$ 
		Finally, let $\mathcal{C}_{B_iC}$ be the family of all $C_{2k+1}$ subgraphs of $G_{\alpha, \beta, \gamma}$ which have exactly $2k-3$ edges in $E[B_i,C]$ for some $i \in \{1,2\}$.
		Give each element $C_{2k+1}\in \mathcal{C}_{B_iC}$ a uniform weight
		$$\psi(C_{2k+1})=\frac{\lambda}{|\mathcal{C}_{B_iC}|}.$$
		
		We give zero weight to the other cycles $C_{2k+1}$. We would like to choose $\delta, \mu, \lambda$ so that this fractional packing becomes perfect, that is, the weights of the cycles containing each edge sum to $1$. To achieve this, we must have 
		\begin{equation}
			\begin{aligned}
				|A_1||A_2|&=\alpha^2 \cdot n^2=(2k-3)\delta+\mu+\lambda,\ \ \ \\
				2|A_i||B_i|&=2\alpha\beta\cdot n^2=2\delta+(1+(2k-3))\mu+2\lambda, \ \ \ (i \in \{1,2\})\\
				2|B_i||C|&=2\beta\gamma\cdot n^2=2\delta+2\mu+(1+(2k-3))\lambda. \ \ \ (i \in \{1,2\})\\
			\end{aligned}  \label{eq:pair-dense} 
		\end{equation}
		
		Indeed, any cycle $C_{2k+1}$ containing exactly $2k-3$ edges between two neighbouring classes will contain exactly one edge between any other pair of neighbouring classes. Observe that the cardinality of the classes of $V(G_{\alpha, \beta, \gamma}(n))$ can be expressed from (\ref{eq:pair-dense}) as follows.
		
		\begin{equation}
			\begin{aligned}
				|B_i|/|A_i|&=\beta/\alpha=\frac{\lambda+(k-1)\mu+\delta}{\lambda+\mu+(2k-3)\delta}, \ \ \  (i\in \{1,2\})\\
				|C|/|A_i|&=\gamma/\alpha=\frac{(k-1)\lambda+\mu+\delta}{\lambda+(k-1)\mu+\delta}, \ \ \  (i\in \{1,2\})
			\end{aligned}  \label{eq:cardi} 
		\end{equation}
		
		Together from (\ref{eq:pair-dense}) and (\ref{eq:cardi}), we can express the edge density of an unbalanced blow-up of a $C_5$ which has a perfect fractional $C_{2k+1}$-packing. Our aim is to give a lower bound
		on the edge density which is better than that of $C_5[n/5]$.
		
		\begin{equation}
			\begin{aligned}
				\frac{|E(G_{\alpha, \beta, \gamma}(n))|}{n^2}&=\frac{|A_1||A_2|+ 2|A_1||B_1|+2|B_1||C|}{(2|A_1|+2|B_1|+|C|)^2}\\
				&=\frac{(2k+1)(\lambda+\mu+\delta)}{|A_1||A_2|(2+2\frac{\lambda+(k-1)\mu+\delta}{\lambda+\mu+(2k-3)\delta}+ \frac{(k-1)\lambda+\mu+\delta}{\lambda+(k-1)\mu+\delta})^2}\\
				&= \frac{(2k+1)(\lambda+\mu+\delta)}{(\lambda+\mu+(2k-3)\delta)(2+2\frac{\lambda+(k-1)\mu+\delta}{\lambda+\mu+(2k-3)\delta}+ \frac{(k-1)\lambda+\mu+\delta}{\lambda+(k-1)\mu+\delta})^2}.
			\end{aligned}  \label{eq:density} 
		\end{equation}
		
		From this point, a simple maximization problem on $\lambda, \mu, \delta$ derives the largest edge density which can be attained asymptotically via this approach. This would lead to a rather complex formula using roots of cubic polynomials thus we decide to pick a triple of values for $(\lambda, \mu, \delta)$ for each $k\geq 3$ as  
		$$(\lambda, \mu, \delta)=\left(0,1,-0.5+\frac{\sqrt[3]{4k+15}}{2}\right) \mbox{ \ \ for } k.$$%\todo{átszámolni, vmi ne kóser}
		
		Easy calculation shows that this provides %maximize (  101(x+y+z)/(x+y+97z)/(2+2(x+49y+z)/(x+y+97z)+(49x+y+z)/(x+49y+z))^2  ) , 0<=x, 0<=y, 0<=z
		
		$$ \frac{|E(G_{\alpha, \beta, \gamma}(n))|}{n^2}=0.2016>0.2= \frac{|E(G_{0.2, 0.2, 0.2}(n))|}{n^2} \mbox{ \ \ for } k=3;$$  and in general if we determine $\alpha, \beta, \gamma$ via (\ref{eq:cardi}) with $(\lambda, \mu, \delta)=(0,1,\frac{-1+\sqrt[3]{4k+15}}{2}) $, we obtain that $ \frac{|E(G_{\alpha, \beta, \gamma}(n))|}{n^2}$ is monotone increasing in $k\geq 3$ and

		$$ \lim_{k\to \infty} \frac{|E(G_{\alpha, \beta, \gamma}(n))|}{n^2}=\frac{k\sqrt[3]{4k}}{k\sqrt[3]{4k}\cdot 4}.$$
		
		This verifies Theorem \ref{oddlarge}.
	\end{proof}
	\begin{remark}
		Exact calculation shows that the solution of the maximization of expression (\ref{eq:density}) is $$-(7 (-350 - 29093/(5865445 + 170859 \sqrt{2022})^{1/3} + (5865445 + 170859 \sqrt{2022})^{1/3}))/8112\approx 0.201615$$ for $k=3,$ i.e., in the case of packing $C_{7}$ graphs without creating a multicolor $K_3$.
	\end{remark} %\todo{C3 vs C5 közben megvan: https://www.sciencedirect.com/sdfe/reader/pii/S0012365X2200334X/pdf }

	\noindent {\bf Acknowledgement}\\
	We would like to thank the anonymous referees for their valuable suggestions in their thorough report which helped us in improving the paper.
	
	%Before we uploaded the paper to the ArXiv, Letícia Mattos pointed out  that together with  József Balogh, Anita Liebenau and Natasha Morrison, they also  proved Theorem 1.6 with a different method, see \cite{Mattos}.\\
	
	%\todo{Gerbner cikkre esetleg lehetne hivatkozni}

	%\section{Concluding remarks}

\end{document}